\documentclass[graybox]{SNmult}
\usepackage{type1cm}        
\usepackage{makeidx}         
\usepackage{graphicx}        
\usepackage{multicol}        
\usepackage[bottom]{footmisc}

\usepackage{newtxtext}       %
\usepackage[varvw]{newtxmath}       

\usepackage{url}
\usepackage{amsmath}
\usepackage{booktabs} 
 \usepackage{xcolor}
\spnewtheorem{assumption}{Assumption}{\bfseries}{\itshape}

\makeindex             

\begin{document}
\title*{A semi-smooth Newton method for efficient evaluation of the inverse hysteresis operator}
\titlerunning{A semi-smooth Newton method for the inverse hysteresis operator}
\author{Herbert Egger \orcidID{0000-0003-3769-8791} and\\ Felix Engertsberger\orcidID{0009-0001-6799-8317}}
\institute{Herbert Egger \at Johann Radon Institute for Computational and Applied Mathematics, Austrian Academy of Sciences, Altenbergerstr.~69, 4040~Linz, Austria, \email{herbert.egger@ricam.oeaw.ac.at} \and Felix Engertsberger \at Institute for Numerical Mathematics, Johannes Kepler University, Altenbergerstr.~69, 4040 Linz, Austria \email{felix.engertsberger@jku.at}}

\maketitle

\abstract{We study the numerical evaluation of an energy-based vector hysteresis model and its incorporation into finite element simulations based on a vector potential formulation. The inherent non-smoothness of the hysteresis model poses challenges for both numerical analysis and implementation. Using tools from convex analysis, we characterize the forward and inverse hysteresis operators in terms of energy densities. This characterization yields well-posedness of the resulting models and leads to robust algorithms for their evaluation based on generalized semi-smooth Newton methods. It furthermore enables a seamless integration into magnetic field simulations, leading to nonlinear and non-smooth optimization problems at every load step. We discuss the finite element discretization, present a semi-smooth Newton method for the iterative solution, and establish global linear convergence with mesh-independent convergence rates. The theoretical results are illustrated by numerical experiments.
\keywords{magnetic hysteresis, inverse hysteresis operator, vector potential formulation, semi-smooth Newton methods, mesh-independent convergence}}

\abstract*{We study the numerical evaluation of an energy-based vector hysteresis model and its incorporation into finite element simulations based on a vector potential formulation. The inherent non-smoothness of the hysteresis model poses challenges for both numerical analysis and implementation. Using tools from convex analysis, we characterize the forward and inverse hysteresis operators in terms of energy densities. This characterization yields well-posedness of the resulting models and leads to robust algorithms for their evaluation based on generalized semi-smooth Newton methods. It furthermore enables a seamless integration into magnetic field simulations, leading to nonlinear and non-smooth optimization problems at every load step. We discuss the finite element discretization, present a semi-smooth Newton method for the iterative solution, and establish global linear convergence with mesh-independent convergence rates. The theoretical results are illustrated by numerical experiments.
\keywords{magnetic hysteresis, inverse hysteresis operator, vector potential formulation, semi-smooth Newton methods, mesh-independent convergence}}

\section{Introduction}
\label{engertsberger:sec:1}

Hysteresis is recognized as one of the dominant loss mechanisms in ferromagnetic materials and therefore plays a crucial role in the analysis and optimization of high-power electromagnetic devices such as electric machines and transformers. Various hysteresis models and approaches for their incorporation into finite element simulations have been proposed in the literature; see~\cite{engertsberger:Sadowski2002,engertsberger:Upadhaya2020} and the references therein.
We here consider the energy-based hysteresis model proposed by Henrotte et al.~\cite{engertsberger:Lavet2013}, based on earlier work by Bergqvist~\cite{engertsberger:Bergqvist1997}. The model is vectorial, thermodynamically consistent by construction, and well suited for implementation in magnetic scalar potential formulations~\cite{engertsberger:Prigozhin2016,engertsberger:Domenig2024}. The inherent non-smoothness of the model, however, makes both its analysis and its numerical realization rather delicate. These difficulties can be addressed using tools from convex analysis~\cite{engertsberger:Beck2017}. In this work, we study the numerical evaluation of the associated inverse hysteresis operator~\cite{engertsberger:egger2024inverse} and its incorporation into magnetic field simulations based on a vector potential formulation.

\bigskip
\noindent
\textbf{Outline and main contributions.}
We first recall the energy-based hysteresis model, review its main properties, and discuss its efficient implementation. We then introduce the inverse hysteresis operator, summarize the properties established in~\cite{engertsberger:Egger2025semi}, and propose a semi-smooth Newton method~\cite{engertsberger:Ulbrich2011} for its evaluation. This approach resolves numerical difficulties observed in related work~\cite{engertsberger:Jacques2015}. We further discuss the incorporation of the inverse hysteresis model into vector potential formulations of magnetic field problems and their discretization by finite elements. The resulting nonlinear systems are solved by globally convergent iterative methods. The performance of the proposed methods is illustrated using representative benchmark problems.

\section{Energy-based hysteresis model}
\label{engertsberger:sec:2}

As a first step of our analysis, we briefly recall the energy-based hysteresis model proposed by Henrotte et al.~\cite{engertsberger:Lavet2013}. The model is based on the relation
\begin{align}     
\label{engertsberger:eq:1}
\mathbf{B} = \mu_0 \mathbf{H} + \sum\nolimits_k \mathbf{J}_k
\end{align}
between magnetic flux $\mathbf{B}$, magnetic field $\mathbf{H}$, and partial magnetic polarizations $\mathbf{J}_k$. 
The latter are defined implicitly via an incremental energy-dissipation principle
\begin{align} \label{engertsberger:eq:2}
    \mathbf{J}_k = \operatorname{arg\,min}_\mathbf{J} \{ U_k(\mathbf{J}) - \langle \mathbf{J}, \mathbf{H} \rangle + \chi_k |\mathbf{J} -  \mathbf{J}_{k,p}| \},
\end{align}
which immediately guarantees thermodynamic consistency of the model.
The internal energy densities $U_k(\cdot)$ and the pinning coefficients $\chi_k$ are material-dependent quantities, while $\mathbf{J}_{k,p}$ denotes the partial polarization from the previous time step encoding the memory of the system. 
Together with \eqref{engertsberger:eq:1}, this yields the constitutive relation
\begin{align}
    \label{engertsberger:eq:3}
    \mathbf{B}(\mathbf{H};\mathbf{J}_p) = \mu_0 \mathbf{H} + \sum\nolimits_k \mathbf{J}_k(\mathbf{H};\mathbf{J}_{k,p}),
\end{align}
where $\mathbf{J}_k(\mathbf{H};\mathbf{J}_{k,p})$ denotes the solution of \eqref{engertsberger:eq:2}, and $\mathbf{J}_p=\{\mathbf{J}_{k,p}\}$ abbreviates the collection of partial polarizations from the previous load step. This relation will be called the \emph{forward hysteresis operator}.
We impose the following assumptions~\cite{engertsberger:Prigozhin2016}.
\begin{assumption}
    \label{engertsberger:ass}
    Let $\mathbf{J}_{k,p} \in \mathbb{R}^3$, $d=3$, $\chi_k \ge 0$, and the internal energy densities have the form $U_k(\mathbf{J}) = -\frac{2 A_s J_{s,k}}{\pi} \log(\cos(\frac{\pi}{2} \frac{|\mathbf{J}|}{J_{s,k}} ) )$ with positive constants $A_s$ and $J_{s,k} > 0$.
\end{assumption}
The functions $U_k$ are continuously differentiable and $\sigma_k$-strongly convex with constant $\sigma_k = \frac{A_s \pi}{2 J_{s,k}}$. 
This allows to establish the following important properties.
\begin{lemma}  \label{engertsberger:lem:1}
\label{engertsberger:theo:cow}
(i) The problems \eqref{engertsberger:eq:2} have unique minimizers and the forward hysteresis operator \eqref{engertsberger:eq:3} can be expressed as $\mathbf{B}(\mathbf{H};\mathbf{J}_p) = \nabla_{\mathbf{H}} w^*(\mathbf{H};\mathbf{J}_p)$ with co-energy density 
\begin{align}
    \label{engertsberger:eq:cow}
    w^*(\mathbf{H};\mathbf{J}_p) = \frac{\mu_0}{2} |\mathbf{H}|^2 - \sum\nolimits_k \min\nolimits_{\mathbf{J}} \Big( U_k(\mathbf{J}) - \langle \mathbf{H}, \mathbf{J} \rangle + \chi_k |\mathbf{J} - \mathbf{J}_{k,p}| \Big).
\end{align}
(ii) The gradients $\nabla_{\mathbf{H}} w^*(\mathbf{H};\mathbf{J}_p)$ are strongly monotone and Lipschitz continuous, i.e., 
\begin{align}
\langle \nabla_\mathbf{H} w^*(\mathbf{H};\mathbf{J}_p)
- \nabla_\mathbf{H} w^*(\mathbf{H}';\mathbf{J}_p),
\mathbf{H} - \mathbf{H}'\rangle &\ge c_1 \,|\mathbf{H} - \mathbf{H}'|^2 \\
|\nabla_\mathbf{H} w^*(\mathbf{H};\mathbf{J}_p) - \nabla_\mathbf{H} w^*(\mathbf{H}';\mathbf{J}_p)| &\le c_2\, |\mathbf{H}-\mathbf{H}'|
\end{align}
with constants $c_1 = \mu_0$ and $c_2 = \mu_0 + \frac{2 \sum\nolimits_k J_{s,k}}{A_s \pi}$ independent of $\mathbf{H}$ and $\mathbf{J}_p$. \\
(iii) The mapping $\mathbf{B}(\mathbf{H};\mathbf{J}_p) = \nabla_\mathbf{H} w^*(\mathbf{H};\mathbf{J}_p)$ is semi-smooth as a function of $\mathbf{H}$ and an element in the generalized Jacobian $\partial_\mathbf{H} \mathbf{B}(\mathbf{H};\mathbf{J}_p)$ is given by $\mathbb{S}_\mathbf{B} = \mu_0 I + \sum\nolimits_k \mathbb{S}_{\mathbf{J}}^k$ with 
\begin{align}
    \label{engertsberger:eq:slant:j}
    \mathbb{S}_{\mathbf{J}}^k = \begin{cases}
    0, & \text{ if } \mathbf{J}_k = \mathbf{J}_{k,p} \\
    \Big( \nabla_{\mathbf{J} \mathbf{J}}^2 U_k + \frac{\chi_k}{|\mathbf{J}_k - \mathbf{J}_{k,p}|} \Big( I - \frac{(\mathbf{J}_k - \mathbf{J}_{k,p}) \,\otimes\, (\mathbf{J}_k - \mathbf{J}_{k,p})}{|\mathbf{J}_k - \mathbf{J}_{k,p}|^2}\Big) \Big)^{-1}, & \text{ if } \mathbf{J}_k \neq \mathbf{J}_{k,p}
    \end{cases}
\end{align}
\end{lemma}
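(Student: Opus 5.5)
The plan is to treat each summand separately through convex conjugation. Write $\phi_k(\mathbf{J}) = U_k(\mathbf{J}) + \chi_k|\mathbf{J}-\mathbf{J}_{k,p}|$. Under Assumption~\ref{engertsberger:ass}, $U_k$ is continuous, $\sigma_k$-strongly convex, and equal to $+\infty$ outside the ball $|\mathbf{J}|<J_{s,k}$ (it blows up at the boundary). Hence $\phi_k$ is proper, lower semicontinuous and $\sigma_k$-strongly convex. The objective in \eqref{engertsberger:eq:2} is $\phi_k(\mathbf{J}) - \langle \mathbf{J},\mathbf{H}\rangle$, which is again strongly convex and coercive, so it has a unique minimizer $\mathbf{J}_k(\mathbf{H};\mathbf{J}_{k,p})$.

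The negative of its minimal value is exactly the Fenchel conjugate $\phi_k^*(\mathbf{H})$. I will use the standard duality fact: the conjugate of a $\sigma$-strongly convex function is differentiable with $\frac{1}{\sigma}$-Lipschitz gradient, and
\begin{align*}
\nabla \phi_k^*(\mathbf{H}) = \operatorname{arg\,min}_{\mathbf{J}}\{\phi_k(\mathbf{J})-\langle \mathbf{J},\mathbf{H}\rangle\}.
\end{align*}
This follows from $\partial\phi_k^* = (\partial\phi_k)^{-1}$ together with single-valuedness. Since $w^* = \frac{\mu_0}{2}|\mathbf{H}|^2 + \sum_k \phi_k^*(\mathbf{H})$, this gives (i). It also gives the Lipschitz bound in (ii) with $c_2 = \mu_0 + \sum_k \sigma_k^{-1} = \mu_0 + \frac{2\sum_k J_{s,k}}{A_s\pi}$. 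For strong monotonicity, each $\nabla\phi_k^*$ is monotone because $\phi_k^*$ is convex, so only the $\mu_0$ term remains and $c_1=\mu_0$.

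For (iii), I would show that $\mathbf{H}\mapsto\mathbf{J}_k$ is semi-smooth. The optimality condition is $\mathbf{H} - \nabla U_k(\mathbf{J}) \in \chi_k \partial|\cdot|(\mathbf{J}-\mathbf{J}_{k,p})$. Set $\mathbf{D}=\mathbf{J}-\mathbf{J}_{k,p}$. Then this condition is equivalent to the proximal fixed-point form
\begin{align*}
\mathbf{D} = \operatorname{prox}_{\chi_k|\cdot|}\big(\mathbf{D} + \mathbf{H} - \nabla U_k(\mathbf{J}_{k,p}+\mathbf{D})\big),
\end{align*}
and the prox here is the vector soft-thresholding map, which is piecewise smooth and therefore semi-smooth. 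Alternatively, one can characterize the minimizer directly in two cases. If $|\mathbf{H}-\nabla U_k(\mathbf{J}_{k,p})|\le\chi_k$, then $\mathbf{J}_k=\mathbf{J}_{k,p}$, and this holds on a closed set. Otherwise $\mathbf{J}_k\ne\mathbf{J}_{k,p}$ solves the smooth equation $\nabla U_k(\mathbf{J}) + \chi_k\frac{\mathbf{J}-\mathbf{J}_{k,p}}{|\mathbf{J}-\mathbf{J}_{k,p}|} = \mathbf{H}$.

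On the open set of the second case, the implicit function theorem applies. Its Jacobian is $\nabla^2U_k + \frac{\chi_k}{|\mathbf{D}|}(I - \hat{\mathbf{D}}\otimes\hat{\mathbf{D}})$ with $\hat{\mathbf{D}}=\mathbf{D}/|\mathbf{D}|$, which is positive definite since $\nabla^2 U_k\succeq\sigma_k I$. So $\mathbf{J}_k$ is $C^1$ there, with derivative equal to the inverse stated in \eqref{engertsberger:eq:slant:j}. In the interior of the first case the derivative is $0$.

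Thus $\mathbf{J}_k$ is a Lipschitz, piecewise $C^1$ selection of two smooth pieces. Piecewise-$C^1$ (PC$^1$) functions are semi-smooth, and their generalized Jacobians contain the Jacobians of the active pieces. Summing over $k$ and adding $\mu_0 I$ then yields $\mathbb{S}_\mathbf{B}$.

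The main obstacle is the boundary case $|\mathbf{H}-\nabla U_k(\mathbf{J}_{k,p})|=\chi_k$. There the smooth piece of the second case degenerates, since $\frac{\chi_k}{|\mathbf{D}|}\to\infty$ as $\mathbf{D}\to0$. Its inverse tends to a limit supported on the line spanned by the limiting direction $\hat{\mathbf{D}}$. That limit is an element of the Clarke Jacobian, but it is not the stated $0$. I would handle this by showing that $0$ is also a limit of Jacobians taken from the interior of the first case. For semi-smoothness, I would verify the PC$^1$ structure through the prox reformulation, using semi-smooth implicit function results in the style of Ulbrich. Alternatively, I would argue directly from the fact that $\mathbf{J}_k$ is a composition of the smooth map $(\nabla U_k+\cdot)^{-1}$ with soft-thresholding in a suitable variable. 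This avoids the degeneracy, because soft-thresholding is itself piecewise smooth across the threshold.
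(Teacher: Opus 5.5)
Your proof of (i) and (ii) is correct and matches the convex-duality argument the paper defers to \cite{engertsberger:Beck2017,engertsberger:Egger2025semi}. You write $w^*=\frac{\mu_0}{2}|\mathbf{H}|^2+\sum_k\phi_k^*$, and the conjugate of the $\sigma_k$-strongly convex $\phi_k$ is differentiable with $\sigma_k^{-1}$-Lipschitz gradient given by the minimizer of \eqref{engertsberger:eq:2}. In (iii), your computation off the threshold is right, and so is your observation that $0$ enters the Clarke Jacobian as a limit from the pinned region.

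The gap is semi-smoothness \emph{at} threshold points $|\mathbf{H}-\nabla U_k(\mathbf{J}_{k,p})|=\chi_k$. This is the only non-routine part of (iii), and you do not establish it. The PC$^1$ conclusion does not follow from what you have. PC$^1$ requires selection functions that are $C^1$ on a full neighbourhood of the point. Your implicit-function branch lives only on the open set $|\mathbf{H}-\nabla U_k(\mathbf{J}_{k,p})|>\chi_k$, its defining equation is not differentiable at $\mathbf{D}=0$, and you give no $C^1$ extension across the threshold.

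Both remedies you list leave their decisive hypothesis unchecked.
\begin{itemize}
\item The prox route needs a semi-smooth implicit function theorem (Clarke/Gowda type, cf.\ \cite{engertsberger:Gowda2004}). That theorem requires every $I-M+M\nabla^2U_k$ with $M\in\partial\operatorname{prox}_{\chi_k|\cdot|}$ to be nonsingular. This does hold: diagonalize the symmetric $M$, whose spectrum lies in $[0,1]$, and expand the determinant row by row into a convex combination of principal minors of a positive definite matrix. But it is exactly the missing step.
\item The soft-thresholding route is not justified as stated. For non-quadratic $U_k$, $(\nabla U_k+\cdot)^{-1}$ is not a smooth map in any evident sense. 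Producing a smooth $h$ with $\mathbf{D}=\operatorname{soft}_{\chi_k}(h(\mathbf{H}))$ needs the same local inverse-function argument.
\end{itemize}

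A clean way to close the gap, in the spirit of Lemma~\ref{engertsberger:lem:2}, is to work in the dual variable. For $\chi_k>0$ the active branch is the KKT system $\nabla U_k^*(\mathbf{Y})-\mathbf{J}_{k,p}+\lambda(\mathbf{Y}-\mathbf{H})=0$, $\tfrac12(|\mathbf{Y}-\mathbf{H}|^2-\chi_k^2)=0$. Its Jacobian with respect to $(\mathbf{Y},\lambda)$ at $\lambda=0$ is $\left(\begin{smallmatrix}\nabla^2U_k^* & \mathbf{Y}-\mathbf{H}\\ (\mathbf{Y}-\mathbf{H})^\top & 0\end{smallmatrix}\right)$. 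This matrix is nonsingular because $\nabla^2U_k^*$ is positive definite and $|\mathbf{Y}-\mathbf{H}|=\chi_k>0$. Consequently:
\begin{itemize}
\item the branch extends smoothly across the threshold, with $\lambda<0$ on the pinned side;
\item $\mathbf{Y}_k$ is a genuine PC$^1$ selection of this branch and the constant $\nabla U_k(\mathbf{J}_{k,p})$;
\item $\mathbf{J}_k=\nabla U_k^*(\mathbf{Y}_k)$ is semi-smooth.
\end{itemize}

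On the active side, $\lambda=|\mathbf{D}|/\chi_k$ and $(\mathbf{H}-\mathbf{Y})/\chi_k=\mathbf{D}/|\mathbf{D}|$. So this amounts to writing $\mathbf{D}=\max(r,0)\,\mathbf{e}$ with $|\mathbf{e}|=1$, where $(r,\mathbf{e})\mapsto\nabla U_k(\mathbf{J}_{k,p}+r\mathbf{e})+\chi_k\mathbf{e}$ is a local diffeomorphism at $r=0$. The degeneracy you noticed is an artifact of using the coordinate $\mathbf{D}$.

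Finally, your argument that $0\in\partial_{\mathbf{H}}\mathbf{J}_k$ needs the pinned ball to have nonempty interior, i.e.\ $\chi_k>0$. Assumption~\ref{engertsberger:ass} allows $\chi_k=0$. In that case $\mathbf{J}_k=\nabla U_k^*(\mathbf{H})$ is $C^1$, and the stated value $0$ at $\mathbf{H}=\nabla U_k(\mathbf{J}_{k,p})$ is not in its generalized Jacobian. That case has to be excluded, or the value replaced there by $(\nabla^2U_k)^{-1}$.
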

\noindent 
For details on the terminology and a complete proof of the assertions, we refer to \cite{engertsberger:Beck2017, engertsberger:Egger2025semi}. 

Note that the hysteresis operator $\mathbf{B}(\mathbf{H};\mathbf{J}_p)$ is \emph{not differentiable} in a classical sense. This is a consequence of the non-smooth term in the minimization problem \eqref{engertsberger:eq:2} and represents a characteristic feature of rate-independent hysteresis models~\cite{engertsberger:Mielke2015}. The following observation simplifies the numerical solution of \eqref{engertsberger:eq:2}; see \cite{engertsberger:Prigozhin2016,engertsberger:egger2024inverse} for further details. 
\begin{lemma} \label{engertsberger:lem:2}
The solutions of \eqref{engertsberger:eq:2} are given by $\mathbf{J}_k=\nabla U_k^*(\mathbf{Y}_{k})$ with 
\begin{align}
    \label{engertsberger:eq:forward:dual}
    \mathbf{Y}_k = \operatorname{arg\,min}_{|\mathbf{H} - \mathbf{Y}| \leq \chi_k} \{ U^*_k(\mathbf{Y}) - \langle \mathbf{Y}, \mathbf{J}_{k,p} \rangle \}
\end{align}
and conjugate functions $U_k^*(\mathbf{Y})= \frac{2 J_{s,k}}{\pi} \Big( |\mathbf{Y}| \arctan(\tfrac{|\mathbf{Y}|}{A_s}) - \frac{1}{2} A_s \log(A_s^2 + |\mathbf{Y}|^2) \Big)$. 
\end{lemma}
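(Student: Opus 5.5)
We prove the lemma by Fenchel--Rockafellar duality applied to each problem \eqref{engertsberger:eq:2} separately; the index $k$ is fixed and dropped. Write the primal objective as $f(\mathbf{J}) + g(\mathbf{J})$ with $f(\mathbf{J}) = U(\mathbf{J}) - \langle \mathbf{J},\mathbf{H}\rangle$ and $g(\mathbf{J}) = \chi|\mathbf{J}-\mathbf{J}_p|$. Both functions are proper, convex and lower semicontinuous. The function $g$ is finite everywhere, so the constraint qualification (e.g.\ $\operatorname{dom} g = \mathbb{R}^3$ intersecting the relative interior of $\operatorname{dom} f$) holds trivially. Strong duality therefore holds and the dual optimum is attained.

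\textbf{Computing the conjugates.} We have $f^*(\mathbf{Y}) = U^*(\mathbf{Y}+\mathbf{H})$. The function $g$ is a translated, scaled norm, so $g^*(\mathbf{Z}) = \langle \mathbf{Z},\mathbf{J}_p\rangle$ if $|\mathbf{Z}|\le\chi$, and $+\infty$ otherwise. The dual problem $\min_{\mathbf{Z}} f^*(-\mathbf{Z}) + g^*(\mathbf{Z})$ then reads
\begin{align*}
\min_{|\mathbf{Z}|\le\chi}\; U^*(\mathbf{H}-\mathbf{Z}) + \langle \mathbf{Z},\mathbf{J}_p\rangle .
\end{align*}
Substituting $\mathbf{Y} = \mathbf{H}-\mathbf{Z}$ turns the objective into $U^*(\mathbf{Y}) - \langle \mathbf{Y},\mathbf{J}_p\rangle + \text{const}$ over the ball $|\mathbf{H}-\mathbf{Y}|\le\chi$. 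This is exactly \eqref{engertsberger:eq:forward:dual}.

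\textbf{Uniqueness of $\mathbf{Y}$ and recovery of $\mathbf{J}$.} The dual minimizer is unique because $\nabla U$ is globally defined and its inverse $\nabla U^*$ is Lipschitz, so $U^*$ is strictly convex. The primal--dual optimality conditions give $-\mathbf{Z}\in\partial f(\mathbf{J})$, i.e.\ $\mathbf{Y} = \mathbf{H}-\mathbf{Z} = \nabla U(\mathbf{J})$. Since $U$ is Legendre (differentiable and strictly convex on the open ball $|\mathbf{J}|<J_s$, with gradient blowing up at the boundary), this inverts to $\mathbf{J} = \nabla U^*(\mathbf{Y})$. The primal minimizer is unique by Lemma~\ref{engertsberger:lem:1}(i).

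\textbf{Explicit formula for $U^*$.} By radial symmetry, $U^*(\mathbf{Y}) = \sup_{0\le r<J_s}\big(r|\mathbf{Y}| - u(r)\big)$ with $u(r) = -\frac{2A_sJ_s}{\pi}\log\cos(\frac{\pi r}{2J_s})$. Since $u'(r) = A_s\tan(\frac{\pi r}{2J_s})$, the maximizer is $r = \frac{2J_s}{\pi}\arctan(|\mathbf{Y}|/A_s)$. Inserting it and using $\cos(\arctan t) = (1+t^2)^{-1/2}$ gives
\begin{align*}
U^*(\mathbf{Y}) = \frac{2J_s}{\pi}\Big(|\mathbf{Y}|\arctan(\tfrac{|\mathbf{Y}|}{A_s}) - \tfrac{A_s}{2}\log(A_s^2+|\mathbf{Y}|^2)\Big) + \text{const},
\end{align*}
where the constant is $\frac{J_sA_s}{\pi}\log A_s^2$. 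Additive constants do not affect either the minimizer in \eqref{engertsberger:eq:forward:dual} or $\nabla U^*$, so we may drop it.

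\textbf{Main obstacle.} The one step needing care is the justification of the optimality condition $\mathbf{Y} = \nabla U(\mathbf{J})$ in the duality argument. One must confirm that the primal minimizer lies in the open domain $|\mathbf{J}|<J_s$, so that $\partial f$ is single-valued there. This follows because $U(\mathbf{J})\to\infty$ as $|\mathbf{J}|\to J_s$ while the remaining terms stay bounded.
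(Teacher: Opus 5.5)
Your Fenchel--Rockafellar argument is correct and is the standard convex-duality reformulation that the paper relies on without proof, deferring to its cited references; you also rightly observe that the displayed $U_k^*$ is exact only up to the additive constant $\frac{2A_sJ_{s,k}}{\pi}\log A_s$, which affects neither the minimizer nor $\nabla U_k^*$. One small repair: Lipschitz continuity of $\nabla U^*$ does not give strict convexity of $U^*$, and $\nabla U$ is defined only on the open ball, so uniqueness of $\mathbf{Y}$ should instead come from injectivity of $\nabla U^*$, or more simply from your own relation $\mathbf{Y}=\nabla U(\mathbf{J})$ together with uniqueness of $\mathbf{J}$.
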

\begin{remark}
The minimization problems \eqref{engertsberger:eq:forward:dual} are convex and their unique solution can be computed as follows: If $\mathbf{Y}_{k} = \nabla_\mathbf{J} U_k(\mathbf{J}_{k,p})$  fulfills the constraint, then $\mathbf{J}_k = \mathbf{J}_{k,p}$. Otherwise the solution lies on the boundary $|\mathbf{H} - \mathbf{Y}| = \chi_k$ and can be determined efficiently by Newton-type methods; see \cite{engertsberger:Nocedal2006} and \cite{engertsberger:Prigozhin2016}. 
Once the quantities $\mathbf{J}_k$ have been determined, the co-energy density $w^*(\mathbf{H};\mathbf{J}_p)$, the hysteresis operator $\mathbf{B}(\mathbf{H};\mathbf{J}_p)$, and the generalized Jacobians $\mathbb{S}_\mathbf{B}$ can be evaluated efficiently using the formulas of Lemma~\ref{engertsberger:lem:1}.
\end{remark}

\section{Inverse hysteresis operator}
\label{engertsberger:sec:3}

The results of Lemma~\ref{engertsberger:lem:1} imply 
that the mapping $\mathbf{H} \mapsto \mathbf{B}(\mathbf{H};\mathbf{J}_p)$ is invertible. The inverse mapping $\mathbf{B} \mapsto \mathbf{H}(\mathbf{B};\mathbf{J}_p)$ is called the \emph{inverse hysteresis operator}. It is well-defined and inherits important properties from the forward operator. 
In particular, it can be characterized as the gradient of the magnetic energy density, defined by
\begin{align}
    \label{engertsberger:eq:w}
    w(\mathbf{B};\mathbf{J}_p) = \sup\nolimits_{\,\mathbf{H}} \{ \langle \mathbf{B}, \mathbf{H} \rangle  - w^*(\mathbf{H};\mathbf{J}_p) \}.
\end{align}
Note that the functions $w(\mathbf{B};\mathbf{J}_p)$ and $w^*(\mathbf{H};\mathbf{J}_p)$ are related by convex duality \cite{engertsberger:Beck2017}, which is the key ingredient for proving the following properties. 

\begin{lemma} \label{engertsberger:lem:3}
Under Assumption~\ref{engertsberger:ass}, the following assertions hold true: \\
(i) The function \eqref{engertsberger:eq:w} is well-defined and continuously differentiable. Its gradient is
\begin{align}
    \label{engertsberger:eq:mat:w}
    \nabla_\mathbf{B} w(\mathbf{B};\mathbf{J}_p) = \mathbf{H},
\end{align}
where $\mathbf{H}$ is the optimal point in the maximization problem \eqref{engertsberger:eq:w}. \\
(ii) The mapping $\nabla_{\mathbf{B}} w(\mathbf{B};\mathbf{J}_p)$ is the inverse of $\nabla_\mathbf{H} w^*(\mathbf{H};\mathbf{J}_p)$ and satisfies 
\begin{align}
\langle \nabla_\mathbf{B} w(\mathbf{B}) 
- \nabla_\mathbf{B} w(\mathbf{B}'),
\mathbf{B} - \mathbf{B}'\rangle &\ge \tfrac{1}{c_2} |\mathbf{B} - \mathbf{B}'|^2 \\
|\nabla_\mathbf{B} w(\mathbf{B}) - \nabla_\mathbf{B} w(\mathbf{B}')| &\le \tfrac{1}{c_1} |\mathbf{B}-\mathbf{B}'|
\end{align}
with the same constants $c_1, c_2$ as introduced in Lemma~ \ref{engertsberger:lem:1} above. \\
(iii) The operator $\mathbf{H}(\mathbf{B};\mathbf{J}_p) = \nabla_\mathbf{B} w(\mathbf{B};\mathbf{J}_p)$ is semi-smooth as a function of $\mathbf{B}$ and an element of the generalized Jacobian $\partial_\mathbf{B} \mathbf{H}(\mathbf{B};\mathbf{J}_p)$ is given by $\mathbb{S}_\mathbf{H} = (\mathbb{S}_\mathbf{B})^{-1}$. 
\end{lemma}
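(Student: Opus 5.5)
The plan is to derive everything from Lemma~\ref{engertsberger:lem:1} via convex duality, with $\mathbf{J}_p$ fixed throughout and suppressed from the notation.

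\emph{Step 1: regularity of $w^*$.} By Lemma~\ref{engertsberger:lem:1}, $w^*$ is continuously differentiable. Its gradient is $c_1$-strongly monotone and $c_2$-Lipschitz. Hence $w^*$ is convex, finite, $c_1$-strongly convex and $c_2$-smooth on $\mathbb{R}^3$.

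\emph{Step 2: assertions (i) and (ii).} For fixed $\mathbf{B}$, the map $\mathbf{H}\mapsto\langle\mathbf{B},\mathbf{H}\rangle-w^*(\mathbf{H})$ is strongly concave and continuous. It is therefore coercive, so the supremum in \eqref{engertsberger:eq:w} is attained at a unique $\mathbf{H}$. This maximizer is characterized by $\nabla_\mathbf{H} w^*(\mathbf{H})=\mathbf{B}$. By Lemma~\ref{engertsberger:lem:1}(ii) and the Zarantonello/Browder--Minty theorem (strong monotonicity plus Lipschitz continuity), $\nabla w^*$ is a bijection of $\mathbb{R}^3$. Thus $w=(w^*)^*$ is finite everywhere and well-defined.

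Next I would invoke the standard duality results from \cite{engertsberger:Beck2017}. The conjugate of a $c_1$-strongly convex function is differentiable with $1/c_1$-Lipschitz gradient. The conjugate of a $c_2$-smooth convex function is $1/c_2$-strongly convex. Together with $\partial w(\mathbf{B})=\{\mathbf{H}:\mathbf{B}\in\partial w^*(\mathbf{H})\}$, which follows from the Fenchel--Young equality since $w^*$ is closed and convex, this gives three things. First, $\nabla_\mathbf{B} w(\mathbf{B})$ equals the maximizer $\mathbf{H}$, which proves (i); continuity of the gradient follows from its Lipschitz continuity. Second, $\nabla w=(\nabla w^*)^{-1}$. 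Third, the two estimates in (ii) hold.

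These estimates can also be checked directly by substituting $\mathbf{B}=\nabla w^*(\mathbf{H})$ and $\mathbf{B}'=\nabla w^*(\mathbf{H}')$. Monotonicity with constant $1/c_2$ follows from the inequality $\langle \mathbf{B}-\mathbf{B}',\mathbf{H}-\mathbf{H}'\rangle\ge \frac{1}{c_2}|\mathbf{B}-\mathbf{B}'|^2$. This is co-coercivity of the gradient of a convex $c_2$-smooth function (Baillon--Haddad). The Lipschitz bound $1/c_1$ follows from Cauchy--Schwarz applied to the strong monotonicity in Lemma~\ref{engertsberger:lem:1}(ii).

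\emph{Step 3: assertion (iii), the main obstacle.} The issue is semi-smoothness of the inverse map together with identifying an element of its generalized Jacobian. Lemma~\ref{engertsberger:lem:1}(iii) gives semi-smoothness of $\mathbf{B}(\cdot)$. Every element of its Clarke Jacobian is symmetric and satisfies $\mathbb{S}\ge c_1 I$, because it is a limit of Jacobians of a $c_1$-strongly monotone map. So every element is invertible.

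I would then apply the inverse function theorem for semismooth maps from \cite{engertsberger:Ulbrich2011}, in the form used in \cite{engertsberger:Egger2025semi}. It states that if $F$ is Lipschitz and semismooth and all elements of $\partial F(x)$ are nonsingular, then $F^{-1}$ is locally Lipschitz and semismooth near $F(x)$. The generalized derivatives of $F^{-1}$ at $F(x)$ are then the inverses $\{\mathbb{S}^{-1}:\mathbb{S}\in\partial F(x)\}$, and these serve as a Newton-type (slant) derivative for $F^{-1}$.

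The delicate point is whether $\mathbb{S}_\mathbf{B}^{-1}$ lies in the Clarke Jacobian of $\mathbf{H}$ itself or only in a generalized Jacobian in the Newton-derivative sense. I would argue via the latter: the semismooth estimate $\mathbf{B}(\mathbf{H}+\mathbf{d})-\mathbf{B}(\mathbf{H})-\mathbb{S}_\mathbf{B}(\mathbf{H}+\mathbf{d})\mathbf{d}=o(|\mathbf{d}|)$ transfers to the inverse. To see this, set $\mathbf{d}=\mathbf{H}(\mathbf{B}+\mathbf{e})-\mathbf{H}(\mathbf{B})$ and multiply the estimate by $\mathbb{S}_\mathbf{B}^{-1}$. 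This factor is uniformly bounded, with norm at most $1/c_1$. Since $|\mathbf{d}|\le |\mathbf{e}|/c_1$ by (ii), the remainder remains $o(|\mathbf{e}|)$.

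This transfer argument yields (iii) with the paper's meaning of generalized Jacobian, which is the notion needed for the Newton method.
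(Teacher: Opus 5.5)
Your proposal is correct and follows essentially the paper's route: (i)--(ii) come from standard conjugate-duality facts in \cite{engertsberger:Beck2017} (strong convexity $\leftrightarrow$ Lipschitz gradient of the conjugate, and $\nabla w = (\nabla w^*)^{-1}$), and (iii) comes from an inverse function theorem for semismooth maps, which the paper takes from \cite{engertsberger:Gowda2004} rather than \cite{engertsberger:Ulbrich2011}. Your direct transfer of the Newton estimate, using $\mathbf{d}=\mathbf{H}(\mathbf{B}+\mathbf{e})-\mathbf{H}(\mathbf{B})$, $|\mathbf{d}|\le|\mathbf{e}|/c_1$ and $|\mathbb{S}_\mathbf{B}^{-1}|\le 1/c_1$, is a valid self-contained substitute for that citation; the bound $\mathbb{S}_\mathbf{B}\ge \mu_0 I$ can also be read off directly from the explicit formula in Lemma~\ref{engertsberger:lem:1}(iii), so it does not need $\mathbb{S}_\mathbf{B}$ to lie in the Clarke Jacobian.
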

The assertions follow from standard properties about conjugate functions; see \cite{engertsberger:Beck2017,engertsberger:Gowda2004}. 
A detailed proof can be found in \cite{engertsberger:Egger2025semi}. 
We next discuss the efficient evaluation of \eqref{engertsberger:eq:w}.
To do so, we consider the equivalent minimization problem 
\begin{align} \label{engertsberger:eq:minH}
 \min\nolimits_{\mathbf{H}} f(\mathbf{H}) \quad \text{with} \quad f(\mathbf{H}) = w^*(\mathbf{H};\mathbf{J}_p) - \langle \mathbf{B},\mathbf{H}\rangle.
\end{align}
For minimization of $f(\mathbf{H})$, we employ a line-search method of the form
\begin{align} \label{engertsberger:iter:1}
\mathbf{H}^{n+1} = \mathbf{H}^n + \alpha^n \delta \mathbf{H}^n, \qquad n \ge 0.
\end{align}
The search directions $\delta \mathbf{H}^n$ are chosen as generalized gradient descent directions 
\begin{align} \label{engertsberger:iter:2}
\delta \mathbf{H}^n = -\mathbb{S}^n \nabla f(\mathbf{H}^n)
\end{align}
where $\mathbb{S}^n$ is an appropriate scaling matrix. For determination of the step size $\alpha^n$ we utilize \emph{Armijo backtracking}, i.e., 
\begin{align} \label{engertsberger:iter:3}
\alpha^n = \max\{&\alpha= \rho^m: 
\phi(\alpha) \le \phi(0) + \sigma \alpha \phi'(0), \ m \ge 0\}, 
\end{align}
with merit function $\phi(\alpha) = f(\mathbf{H}^n + \alpha \delta \mathbf{H}^n)$ and parameters $0<\sigma< \frac{1}{2}$ and $\rho<1$. 
\begin{theorem} \label{engertsberger:thm:main}
Let Assumption \ref{engertsberger:ass} hold and the matrices $\mathbb{S}^n$ be symmetric and positive definite with eigenvalues bounded by $0 < c_3 \le \lambda(\mathbb{S}^n) \le c_4$. Then for any $\mathbf{H}^0 \in \mathbb{R}^d$ the iterates $\mathbf{H}^n$ defined by \eqref{engertsberger:iter:1}--\eqref{engertsberger:iter:3} converge to the unique minimizer $\mathbf{H}$ of \eqref{engertsberger:eq:minH} at a linear rate and the contraction factor only depends on the constants $c_i$, $i=1,\ldots,4$. 
For the choice $\mathbb{S}^n = \mathbb{S}_\mathbf{H}^n$, see Lemma~\ref{engertsberger:lem:1}(iii), the convergence is locally superlinear. 
\end{theorem}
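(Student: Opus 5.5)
The proof follows the standard analysis of gradient-related descent methods for strongly convex functions with Lipschitz gradients, combined with the local theory of semi-smooth Newton methods.

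\textbf{Properties of $f$.} By Lemma~\ref{engertsberger:lem:1}(ii), $\nabla f(\mathbf{H}) = \nabla_\mathbf{H} w^*(\mathbf{H};\mathbf{J}_p) - \mathbf{B}$ is strongly monotone with constant $c_1$ and Lipschitz with constant $c_2$. Hence $f$ is $c_1$-strongly convex and $C^{1,1}$. This gives existence and uniqueness of the minimizer $\mathbf{H}$, which by Lemma~\ref{engertsberger:lem:3} is $\mathbf{H}(\mathbf{B};\mathbf{J}_p)$. It also gives the descent lemma
\[
f(\mathbf{H}+\mathbf{d}) \le f(\mathbf{H}) + \langle \nabla f(\mathbf{H}),\mathbf{d}\rangle + \tfrac{c_2}{2}|\mathbf{d}|^2
\]
and the Polyak--{\L}ojasiewicz-type inequality
\[
f(\mathbf{H}^n)-f(\mathbf{H}) \le \tfrac{1}{2c_1}|\nabla f(\mathbf{H}^n)|^2 .
\]

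\textbf{Well-defined step size.} Write $\mathbf{g}=\nabla f(\mathbf{H}^n)$ and $\mathbf{d}=-\mathbb{S}^n\mathbf{g}$. Then $\phi'(0)=-\langle \mathbb{S}^n\mathbf{g},\mathbf{g}\rangle \le -c_3|\mathbf{g}|^2$ and $|\mathbf{d}|^2\le c_4\langle \mathbb{S}^n\mathbf{g},\mathbf{g}\rangle$. The descent lemma gives
\[
\phi(\alpha)\le\phi(0)+\alpha\phi'(0)\bigl(1-\tfrac{\alpha c_2c_4}{2}\bigr),
\]
so the Armijo condition holds whenever $\alpha\le 2(1-\sigma)/(c_2c_4)$. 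Backtracking therefore terminates with
\[
\alpha^n\ge \bar\alpha:=\min\{1,\,2\rho(1-\sigma)/(c_2c_4)\}.
\]

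\textbf{Global linear rate.} Combining the accepted Armijo condition with the previous step gives
\[
f(\mathbf{H}^{n+1})-f(\mathbf{H}) \le f(\mathbf{H}^n)-f(\mathbf{H}) - \sigma\bar\alpha c_3|\mathbf{g}|^2 .
\]
By the PL inequality,
\[
f(\mathbf{H}^{n+1})-f(\mathbf{H})\le q\,\bigl(f(\mathbf{H}^n)-f(\mathbf{H})\bigr), \qquad q=1-2c_1c_3\sigma\bar\alpha<1 .
\]
Strong convexity, $f(\mathbf{H}^n)-f(\mathbf{H})\ge\frac{c_1}{2}|\mathbf{H}^n-\mathbf{H}|^2$, converts this into R-linear convergence of the iterates with contraction factor $\sqrt q$. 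This factor depends only on $c_1,\dots,c_4$ and on the fixed parameters $\sigma,\rho$.

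\textbf{Local superlinear convergence.} This is the main obstacle. For the Newton choice, the scaling $\mathbb{S}^n$ is the element $(\mathbb{S}_\mathbf{B}^n)^{-1}$ of the generalized Jacobian of $\nabla_\mathbf{H} w^*$ taken at $\mathbf{H}^n$; note that $\mathbb{S}_\mathbf{H}^n = (\mathbb{S}_\mathbf{B}^n)^{-1}$ by Lemma~\ref{engertsberger:lem:3}(iii), consistent with the statement. This matrix is symmetric positive definite with eigenvalues in $[1/c_2,1/c_1]$, since the generalized Jacobian elements of $\nabla_\mathbf{H} w^*$ inherit the bounds of Lemma~\ref{engertsberger:lem:1}(ii). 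The global result therefore applies with $c_3=1/c_2$ and $c_4=1/c_1$.

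Semi-smoothness of $\nabla f$ (Lemma~\ref{engertsberger:lem:1}(iii)) together with uniform invertibility yields the standard estimate
\[
|\mathbf{H}^n+\delta\mathbf{H}^n-\mathbf{H}|=o(|\mathbf{H}^n-\mathbf{H}|).
\]
The key remaining point is that the full step $\alpha^n=1$ is eventually accepted by Armijo with $\sigma<\frac12$. For $C^{1,1}$ semi-smooth functions this is the result of Facchinei (see \cite{engertsberger:Ulbrich2011}), and I would invoke it. The underlying argument expands $f(\mathbf{H}^n+\delta\mathbf{H}^n)-f(\mathbf{H}^n)$ via
\[
\int_0^1\langle\nabla f(\mathbf{H}^n+t\delta\mathbf{H}^n),\delta\mathbf{H}^n\rangle\,dt
\]
and uses semi-smoothness to show that this equals $\frac12\phi'(0)+o(|\delta\mathbf{H}^n|^2)$, while $\phi'(0)\le -c_1|\delta\mathbf{H}^n|^2$. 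Since $\sigma<\frac12$, the Armijo condition then holds for $\alpha=1$ once $\mathbf{H}^n$ is close enough to $\mathbf{H}$. Global convergence from the first part guarantees that the iterates enter this neighbourhood, so the method eventually takes full semi-smooth Newton steps and converges superlinearly.
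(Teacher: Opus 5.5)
Your proposal is correct and follows the same route as the paper. The paper's two-line proof cites the line-search theory of Nocedal--Wright (Ch.~3) for the global linear rate, based on the strong convexity and Lipschitz gradient of $f$, and cites Ulbrich's local theory of damped semi-smooth Newton methods for superlinear convergence. You make those citations explicit: the descent lemma, the uniform lower bound on the Armijo step, the PL inequality, and the eigenvalue bounds $[1/c_2,1/c_1]$ for $(\mathbb{S}_\mathbf{B}^n)^{-1}$. You also handle, via Facchinei's result for $\sigma<\frac12$, the eventual acceptance of the unit step, which the paper leaves implicit.
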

\begin{proof}
Global linear convergence follows from the results of \cite[Ch.~3]{engertsberger:Nocedal2006} and the properties of the function $f(\mathbf{H})$ resulting from Lemma~\ref{engertsberger:lem:3}. 
For $\mathbb{S}^n = \mathbb{S}_\mathbf{H}^n = (\mathbb{S}_\mathbf{B}^n)^{-1}$, the iteration \eqref{engertsberger:iter:1}--\eqref{engertsberger:iter:3} amounts to a damped \emph{semi-smooth Newton method} and superlinear convergence is guaranteed by the results of \cite{engertsberger:Ulbrich2011}.
\end{proof}

\begin{remark} \label{engertsberger:rem:main}
The update step of the iterative scheme 
can be written equivalently as 
\begin{align} \label{engertsberger:eq:alternative}
\mathbf{H}^{n+1} = \mathbf{H}^n + \alpha^n \mathbb{S}^n \big(\mathbf{B} - \mathbf{B}(\mathbf{H}^n;\mathbf{J}_p) \big). 
\end{align}
This corresponds to a fixed-point iteration for solving $\mathbf{B}(\mathbf{H};\mathbf{J}_p) = \mathbf{B}$, i.e., for inverting the forward hysteresis operator. 
A closely related scheme has been proposed in \cite{engertsberger:Jacques2015},which however did not include a line search and approximated the Jacobian at turning points of the hysteresis curve by finite differences. Convergence of this method can, in general, not be expected and, indeed, numerical convergence difficulties and convergence problems have been reported in \cite{engertsberger:Jacques2015}. 
By combining the monotonicity and semi-smoothness properties of the forward operator stated in Lemma~\ref{engertsberger:lem:1} with standard globalization arguments for Newton-type methods for convex problems~\cite{engertsberger:Nocedal2006}, we obtain a globally convergent method that overcomes all numerical issues provably.
\end{remark}

\section{Incorporation into a vector potential formulation}
\label{engertsberger:sec:4}

The inverse hysteresis operator $\mathbf{H}=\mathbf{H}(\mathbf{B};\mathbf{J}_p) = \nabla_{\mathbf{B}} w(\mathbf{B};\mathbf{J}_p)$ introduced in the previous section is well-suited for magnetic field simulations based on the vector potential formulation. 
For an incremental load step, the corresponding problem can be written as
\begin{align} \label{engertsberger:eq:minvar}
\min_{\mathbf{A} \in \mathbb{V}} F(\mathbf{A}) \quad \text {with} \quad F(\mathbf{A}) =\int_\Omega w(\operatorname{curl} \mathbf{A};\mathbf{J}_p) - \mathbf{j}_s \cdot \mathbf{A} \, dx.
\end{align}
Here $\Omega \subset \mathbb{R}^3$ is the computational domain and $\mathbb{V} \subset H(\operatorname{curl};\Omega)$ is an appropriate subspace encoding the relevant boundary and gauge conditions. 
This formulation is sufficiently general to cover problems in 2d and 3d as well as their finite element discretizations~\cite{engertsberger:Meunier2008,engertsberger:Egger2024ho}.
For the iterative solution of \eqref{engertsberger:eq:minvar}, we use a line-search method 
\begin{align} \label{engertsberger:iter:11} 
\mathbf{A}^{n+1} = \mathbf{A}^n + \alpha^n \delta \mathbf{A}^n, \qquad n \ge 0
\end{align}
with $\delta \mathbf{A}^n \in \mathbb{V}$ determined from the linearized variational problems 
\begin{align} \label{engertsberger:iter:22}
    \int_\Omega \mathbb{S}^n \operatorname{curl} \delta \mathbf{A}^n \cdot \operatorname{curl} \mathbf{v} \, dx = \int_\Omega  \mathbf{j}_s \cdot \mathbf{v} - \nabla_\mathbf{B} w(\operatorname{curl} \mathbf{A}^n) \cdot \operatorname{curl} \mathbf{v}  \, dx \quad \forall \mathbf{v} \in \mathbb{V}.
\end{align}
The step size $\alpha^n$ is again determined by Armijo backtracking, i.e., 
\begin{align} \label{engertsberger:iter:33}
\alpha^n = \max\{&\alpha= \rho^m: 
\Phi(\alpha) \le \Phi(0) + \sigma \alpha \Phi'(0), \ m \ge 0\}, 
\end{align}
with merit function $\Phi(\alpha) = F(\mathbf{A}^n + \alpha \delta \mathbf{A}^n)$.
From the analysis developed in \cite{engertsberger:Egger2024ho}, we can immediately deduce the following convergence results.
\begin{theorem} \label{engertsberger:thm:main2}
Let $\Omega \subset \mathbb{R}^3$ be a bounded Lipschitz domain, $\mathbf{j}_s \in L^2(\Omega)^3$, $\mathbb{V}$ be a closed subspace of $H(\operatorname{curl};\Omega)$ such that $\|\mathbf{v}\|_{L^2(\Omega)} \le c_5 \|\operatorname{curl} \mathbf{v}\|_{L^2(\Omega)}$ for all $\mathbf{v} \in \mathbb{V}$. 
Furthermore let $w(\mathbf{B};\mathbf{J}_p)$ satisfy the conditions of Lemma~\ref{engertsberger:lem:3} and $\mathbb{S}^n$ be symmetric and positive definite with eigenvalues bounded by $c_3,c_4$.  
Then for any $\mathbf{A}^0 \in \mathbb{V}$, 
the iterates $\mathbf{A}^n$ of \eqref{engertsberger:iter:11}--\eqref{engertsberger:iter:33} converge to the unique solution $\mathbf{A} \in \mathbb{V}_\mathbf{g}$ of \eqref{engertsberger:eq:minvar} at a linear rate, i.e., 
\begin{align}
    \|\operatorname{curl} (\mathbf{A}^n - \mathbf{A})\|_{L^2(\Omega)} \leq C\,q^n \|\operatorname{curl} (\mathbf{A}^0 - \mathbf{A})\|_{L^2(\Omega)}  
\end{align}
and the contraction factor $q$ only depends on the constants $c_i$, $i=1,\ldots,5$.  
\end{theorem}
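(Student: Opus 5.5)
We treat \eqref{engertsberger:eq:minvar} as a smooth, strongly convex minimization problem on the Hilbert space $\mathbb{V}$ equipped with the energy norm $\|\mathbf{v}\|_{\mathbb{V}} := \|\operatorname{curl}\mathbf{v}\|_{L^2(\Omega)}$. We then repeat, in infinite dimensions, the standard line-search argument of \cite[Ch.~3]{engertsberger:Nocedal2006} that underlies Theorem~\ref{engertsberger:thm:main}. By the Friedrichs-type inequality with constant $c_5$, this is a norm on $\mathbb{V}$ equivalent to the $H(\operatorname{curl})$ norm.

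\emph{Step 1: properties of $F$.} From Lemma~\ref{engertsberger:lem:3}(i)--(ii), $w(\cdot;\mathbf{J}_p)$ is $C^1$, with gradient $\frac{1}{c_2}$-strongly monotone and $\frac{1}{c_1}$-Lipschitz. Hence $F$ is Fr\'echet differentiable on $\mathbb{V}$ with
\[
\langle F'(\mathbf{A}),\mathbf{v}\rangle = \int_\Omega \nabla_\mathbf{B} w(\operatorname{curl}\mathbf{A})\cdot\operatorname{curl}\mathbf{v} - \mathbf{j}_s\cdot\mathbf{v}\,dx .
\]
Integrating the pointwise bounds gives
\[
\langle F'(\mathbf{A})-F'(\mathbf{A}'),\mathbf{A}-\mathbf{A}'\rangle \ge \tfrac1{c_2}\|\mathbf{A}-\mathbf{A}'\|_{\mathbb{V}}^2,
\qquad
\|F'(\mathbf{A})-F'(\mathbf{A}')\|_{\mathbb{V}'}\le \tfrac1{c_1}\|\mathbf{A}-\mathbf{A}'\|_{\mathbb{V}} .
\]
The load term is a bounded functional because of $c_5$. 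Existence and uniqueness of the minimizer $\mathbf{A}$ then follow from strong convexity, continuity and coercivity, and $F'(\mathbf{A})=0$.

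\emph{Step 2: the search directions.} The bilinear form $a_n(\mathbf{u},\mathbf{v})=\int_\Omega\mathbb{S}^n\operatorname{curl}\mathbf{u}\cdot\operatorname{curl}\mathbf{v}\,dx$ is symmetric and satisfies $c_3\|\mathbf{u}\|_{\mathbb{V}}^2\le a_n(\mathbf{u},\mathbf{u})\le c_4\|\mathbf{u}\|_{\mathbb{V}}^2$. By Lax--Milgram, $\delta\mathbf{A}^n$ is well defined by \eqref{engertsberger:iter:22}, i.e.\ $a_n(\delta\mathbf{A}^n,\cdot)=-F'(\mathbf{A}^n)$. Consequently
\[
\Phi'(0)=-a_n(\delta\mathbf{A}^n,\delta\mathbf{A}^n)\le -c_3\|\delta\mathbf{A}^n\|_{\mathbb{V}}^2,
\]
and $\|F'(\mathbf{A}^n)\|_{\mathbb{V}'}\le c_4\|\delta\mathbf{A}^n\|_{\mathbb{V}}$. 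Thus $\delta\mathbf{A}^n$ is a uniform descent direction, with the angle condition controlled by $c_3,c_4$ only.

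\emph{Step 3: step size and contraction.} The Lipschitz bound on $F'$ gives
\[
\Phi(\alpha)\le\Phi(0)+\alpha\Phi'(0)+\tfrac{\alpha^2}{2c_1}\|\delta\mathbf{A}^n\|_{\mathbb{V}}^2 .
\]
The Armijo condition therefore holds for all $\alpha\le 2(1-\sigma)c_1c_3$. Hence backtracking terminates with $\alpha^n\ge\underline\alpha:=\min\{1,2\rho(1-\sigma)c_1c_3\}$, and
\[
F(\mathbf{A}^{n+1})-F(\mathbf{A}^n)\le-\sigma\underline\alpha c_3\|\delta\mathbf{A}^n\|_{\mathbb{V}}^2\le -\tfrac{\sigma\underline\alpha c_3}{c_4^2}\|F'(\mathbf{A}^n)\|_{\mathbb{V}'}^2 .
\]
Strong convexity yields the Polyak--\L{}ojasiewicz inequality $F(\mathbf{A}^n)-F(\mathbf{A})\le\frac{c_2}{2}\|F'(\mathbf{A}^n)\|^2_{\mathbb{V}'}$. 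Combining the two gives
\[
F(\mathbf{A}^{n+1})-F(\mathbf{A})\le q^2\,\bigl(F(\mathbf{A}^n)-F(\mathbf{A})\bigr),
\qquad
q^2=1-\tfrac{2\sigma\underline\alpha c_3}{c_2c_4^2}<1 .
\]
Finally, the two-sided bound
\[
\tfrac1{2c_2}\|\mathbf{A}^n-\mathbf{A}\|^2_{\mathbb{V}}\le F(\mathbf{A}^n)-F(\mathbf{A})\le\tfrac1{2c_1}\|\mathbf{A}^n-\mathbf{A}\|^2_{\mathbb{V}}
\]
follows from strong convexity, Lipschitz continuity and $F'(\mathbf{A})=0$. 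It converts this into the claimed estimate with $C=\sqrt{c_2/c_1}$. Everything depends only on $c_1,\dots,c_4$ (and $\sigma,\rho$), with $c_5$ entering only through well-posedness.

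The main obstacle is Step 3 in the function-space setting. One must make sure that the descent estimate and the PL inequality are formulated in the dual norm induced by $\|\operatorname{curl}\cdot\|$, so that no mesh- or dimension-dependent constants appear. The same argument then applies verbatim to any closed (e.g.\ finite element) subspace of $\mathbb{V}$, which gives mesh independence.
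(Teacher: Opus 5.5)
Your proposal is correct and takes essentially the paper's approach. The paper gives no argument beyond deferring to the line-search analysis of \cite{engertsberger:Egger2024ho}, the Hilbert-space analogue of \cite[Ch.~3]{engertsberger:Nocedal2006}, and you have written that argument out in the energy norm $\|\operatorname{curl}\cdot\|_{L^2(\Omega)}$: uniform descent directions, a lower bound on the Armijo step, a Polyak--\L{}ojasiewicz contraction in energy, and conversion to the norm. Your remark that $c_5$ enters only through well-posedness is consistent with the stated dependence of $q$, and in fact slightly sharper.
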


\begin{remark} 
In ferromagnetic materials, we may choose $\mathbb{S}^n = \mathbb{S}_\mathbf{H}^n \in \partial_\mathbf{B} \mathbf{H}(\operatorname{curl}\mathbf{A}^n;\mathbf{J}_p)$, as defined in Lemma~\ref{engertsberger:lem:3}. The iteration then becomes a semi-smooth Newton method for minimizing \eqref{engertsberger:eq:minvar}.
This method exhibits at least global linear convergence on the continuous and discrete level, and the contraction factor $q$ is independent of the discretization level; see \cite{engertsberger:Egger2024ho,engertsberger:Egger2025semi} for numerical results highlighting this mesh-independent convergence.
\end{remark}

\section{Numerical validation}
\label{engertsberger:sec:5}

We now illustrate the theoretical results by numerical experiments for the stable inversion scheme and the magnetic field simulations based on the vector potential formulation. All computations were carried out in \textsc{Matlab} on a laptop equipped with a 13th Gen Intel Core i5-1335U CPU with a clock rate of 1.30\,GHz. To accelerate the evaluation of the material law, the constitutive model was compiled into a \texttt{.mex} function using \textsc{Matlab} Coder with OpenMP-enabled shared-memory parallelization.

\bigskip
\noindent
\textbf{Stable numerical inversion on a material point.}
To demonstrate the robustness and efficiency of the numerical inversion scheme, we first present results for a single material point.
For the experiment, we use $11$ pinning forces fitted to the measurement data reported in \cite{engertsberger:teamproblem32}. The corresponding material parameters are summarized in Table~\ref{engertsberger:table:param}.

As magnetic field excitation we choose $\mathbf{H}^\ell = 700\,( \sin(t^\ell (5/2) \pi),0)$ with $200$ time steps $t^\ell$ uniformly distributed in the interval $[0,1]$. 
Using the forward hysteresis operator \eqref{engertsberger:eq:3}, we compute the corresponding magnetic fluxes $\mathbf{B}^\ell$, starting from a completely demagnetized state $\mathbf{J}_{k,p}(t^0) = 0$. 
The resulting sequence serves as the input for the inverse hysteresis operator \eqref{engertsberger:eq:mat:w}. For the numerical inversion, we employ the method of Lemma~\ref{engertsberger:lem:2}. Iterations are stopped with a relative tolerance of $\text{tol} = 10^{-8}$.
The corresponding results are depicted in Figure \ref{engertsberger:fig:matpoint}. 
\begin{figure}[ht!]
    \centering
    \includegraphics[trim=0.4cm 0cm 0.6cm 0.1cm, clip,width=0.48\linewidth]{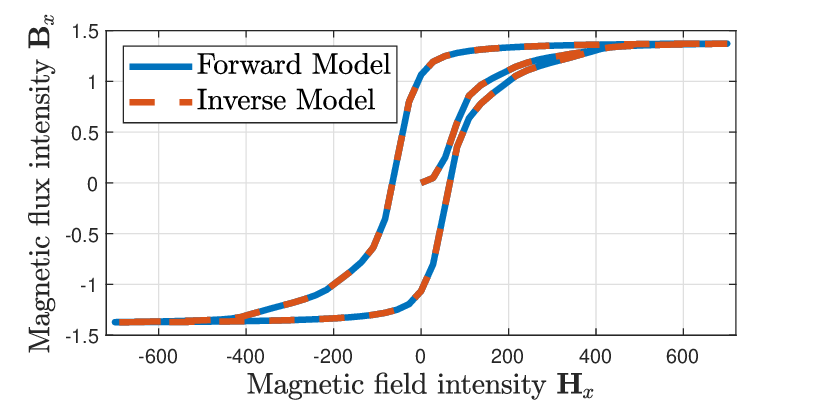}
    \includegraphics[trim=0.3cm 0cm 0.1cm 0.5cm, clip,width=0.51\linewidth]{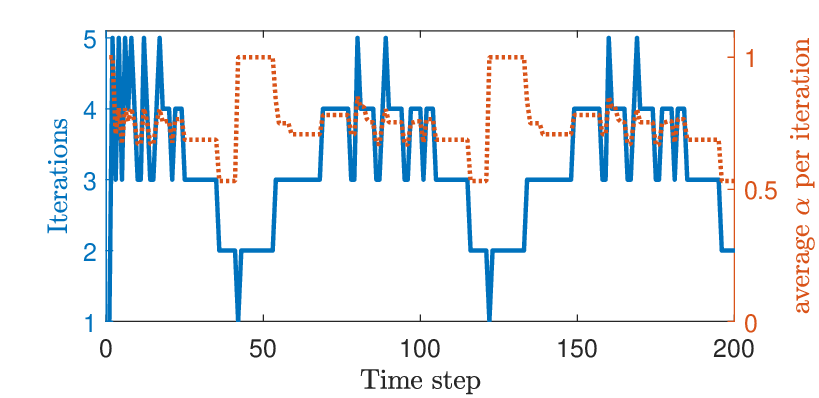}
    \caption{Hysteresis curve of forward and inverse hysteresis operator for an unidirectional excitation (left), iteration number and average step sizes $\alpha$ of the inversion scheme for different time-steps (right)}
    \label{engertsberger:fig:matpoint}
\end{figure}
The left plot clearly illustrates the equivalence of both models. The right plot visualizes the number of iterations and average step sizes required for the individual load steps. Note that only 3--4 iterations are required on average, which illustrates the efficiency of the proposed method. Furthermore, the frequent occurrence of values $\alpha_{\text{avg}} < 1$ highlights the necessity of the line search method.

\begin{table}[ht!]
\centering
\small
\begin{tabular}{@{}cll@{}}
\toprule
\textbf{Parameter} & \textbf{Values} & \textbf{Description} \\
\midrule
$A_s$ &
20 &
field strength \\
$J_{s,k}$ &
0.08, 0.55, 0.49, 0.05, 0.04, 0.07, 0.02, 0.02, 0.02, 0.02, 0.03 &
reference polarization \\
$\chi_k$ &
0, 43, 73, 128, 164, 202, 241, 280, 320, 360, 400 &
pinning strength \\
\bottomrule
\end{tabular}
\caption{Material data for the energy-based hysteresis model used in both numerical tests.}
\label{engertsberger:table:param}
\end{table}

\bigskip
\noindent 
\textbf{Magnetic field simulation. }
Next we present numerical results for a finite element simulation of the three-limbed transformer of the TEAM 32 benchmark problem \cite{engertsberger:teamproblem32}. The specific setup allows the consideration of a quasi-2d setting; the relevant two-dimensional cross-section $\Omega^{2d}$ of the domain $\Omega$ is depicted in the left plot of Fig.~\ref{engertsberger:fig1}.
\begin{figure}[ht!]
    \centering
    \includegraphics[trim=1.6cm 2cm 0.6cm 2cm, clip,width=0.46\linewidth]{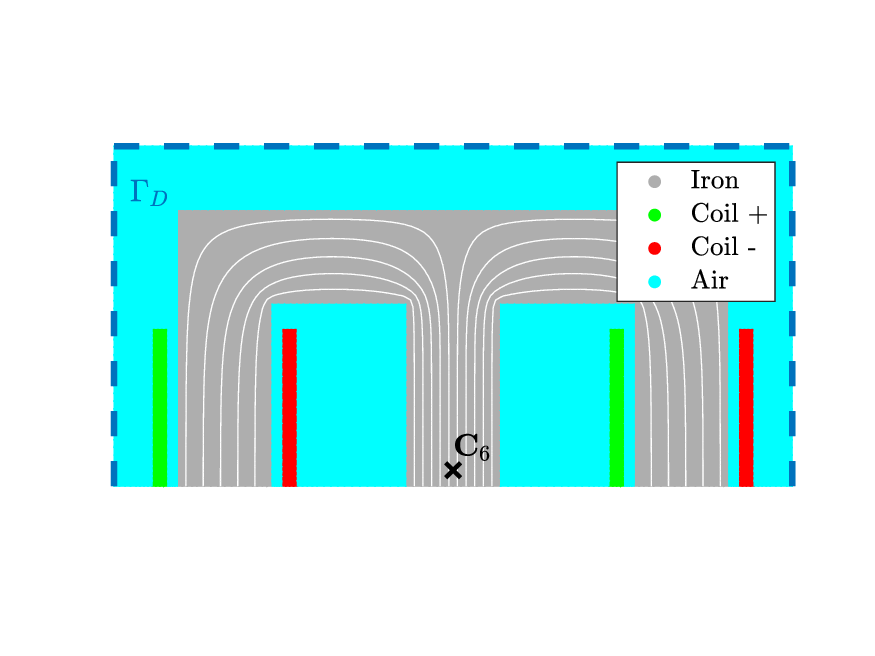}
    \includegraphics[trim=0.0cm 0.1cm 1cm 0.1cm, clip,width=0.53\linewidth]{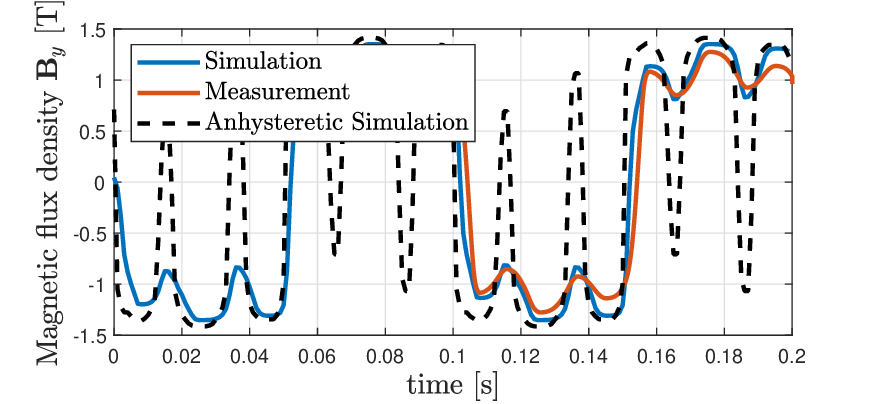}
    \caption{Two-dimensional geometry of a three-limbed transformer for the TEAM 32 benchmark problem (left), and comparison of the simulated magnetic induction at the measurement point C6 to the measurements (right)}
    \label{engertsberger:fig1}
\end{figure}
In the air and the coils we consider a quadratic energy density $w(\mathbf{B}) = \frac{\nu_0}{2} |\mathbf{B}|^2$ while in the ferromagnetic core we use the hysteretic energy $w(\mathbf{B};\mathbf{J}_p)$ with the material data from Table \ref{engertsberger:table:param}. 
For the discretization of the relevant component $\mathbf{A}_z$, we use piecewise linear finite elements defined on a triangular mesh of the 2d cross section $\Omega^{2d}$. 
After every loading step the previous values $\mathbf{J}_{k,p}$ at every quadrature point are updated. In the coils we impose a sinusoidal current with some higher harmonics, which corresponds to the CASE 2 configuration of the benchmark problem. Finally, we terminate the algorithm if the difference in energy is smaller than a relative tolerance of $\text{tol}=10^{-7}$.

The magnetic induction at the pick-up coil C6 is depicted in the right plot of Fig.~\ref{engertsberger:fig1} and compared to measurements and an anhysteretic
simulation.
In Table~\ref{engertsberger:tab:field}, we summarize the convergence behavior for different mesh resolutions.
\begin{table}[ht!]
\centering
\begin{tabular}{c c c c c}
\toprule
\textbf{\# DoF} &
\textbf{\# Elements} &
\textbf{Time } &
\textbf{Iterations} &
\textbf{Iron losses } \\
\midrule
369   &   661   &  2.01 s & 5.04 it & 8.29 J \\
1,369  &  2,644   &  5.72 s & 5.37 it & 8.28 J\\
5,439  & 10,576   &  24.3 s & 5.73 it&  8.29 J\\
21,453 & 42,304   & 117 s & 5.97 it & 8.30 J\\
85,209 & 169,216   & 788 s & 6.12 it & 8.31 J\\
\bottomrule
\end{tabular}
\caption{Total computation times, average iteration counts, and iron losses for simulation of transformer benchmark with 200 time steps and varying discretization levels.}
\label{engertsberger:tab:field}
\end{table}
The nearly constant average iteration counts for varying mesh sizes clearly illustrate the mesh-independent convergence behavior of the proposed method.
To compute the total losses the power density $p_H = \frac{1}{\Delta t}\sum\nolimits_k \chi_k |\mathbf{J}_k - \mathbf{J}_{k,p}|$ is integrated in time and over the whole domain $\Omega$, and then multiplied by the length $L = 2.5$ mm of the transformer. The corresponding results of our computations are summarized in Table~\ref{engertsberger:tab:field}.

\begin{acknowledgement}
This work was supported by the joint DFG/FWF Collaborative Research Centre CREATOR (DFG: Project-ID 492661287/TRR 361; FWF: 10.55776/F90).
\end{acknowledgement}

\end{document}